\newcommand{\N}{\ensuremath{\mathbb{N}}}
\newcommand{\Z}{\ensuremath{\mathbb{Z}}}
\newcommand{\Q}{\ensuremath{\mathbb{Q}}}
\newcommand{\F}{\ensuremath{{\mathbb{F}_2}}}
\newcommand{\K}{\ensuremath{R}}
\renewcommand\vec{\mathbf}
\newcommand{\FF}{\ensuremath{{\mathbb{F}_p}}}
\newcommand{\ones}{\mathbbm{1}\hspace*{-0.5mm}}
\newcommand{\Ann}{\ensuremath{\text{Ann}}}
\newcommand{\Res}{\ensuremath{{\text{Res}_X}}}
\newcommand{\ResY}{\ensuremath{{\text{Res}_Y}}}
\newtheorem{theorem}{Theorem}
\newtheorem*{theorem*}{Theorem}
\newtheorem{lemma}[theorem]{Lemma}
\newtheorem*{lemma*}{Lemma}
\newtheorem{corollary}[theorem]{Corollary}
\newtheorem{definition}{Definition}
\newtheorem*{conjecture}{Conjecture}
\newtheorem{prop}{Property}
\crefname{prop}{Property}{Properties}
\title{Nivat's Conjecture and pattern complexity in algebraic subshifts\footnote{Research supported by the Academy of Finland grant 296018.}}
\date{}
\author[1]{Jarkko Kari}
\author[1,2]{Etienne Moutot}
\affil[1]{Department of Mathematics and Statistics, University of Turku}
\affil[2]{LIP, ENS de Lyon -- CNRS -- UCBL -- Universit\'e de Lyon}
\begin{document}
\maketitle

\begin{abstract}
We study Nivat's conjecture on algebraic subshifts and prove that
in some of them every low complexity configuration is periodic. This is the case
in the Ledrappier subshift (the 3-dot system) and, more generally, in all two-dimensional
algebraic subshifts over $\FF$ defined by a polynomial without line polynomial factors in
more than one direction. We also find an algebraic subshift that is defined by a product of
two line polynomials that has this property (the 4-dot system) and another one that does not.
\end{abstract}

\section{Introduction and preliminaries}

The European Association for Theoretical Computer Science, EATCS, celebrated its 25th anniversary at ICALP 1997 in Bologna. The keynote address was given by Maurice Nivat. His talk --
titled  ``Towards a History of Automata: Why Were They Introduced? What Are They Good for?'' -- suggested the following intriguing problem that has become known as Nivat's conjecture: If an infinite two-dimensional grid has been colored in such a way that, for some $n,m\in\N$, the number of distinct $n\times m$ patterns is at most $nm$, is the coloring necessarily periodic in some direction? The conjecture has attracted wide interest, but it has turned out to be difficult and remains still unsolved today.

A number of partial results and related observations have been made. Periodicity has been established if the number of $n\times m$ patterns is at most $\alpha nm$ for progressively larger and larger constants $\alpha<1$: First for $\alpha=1/144$ in~\cite{epifanio}, then for $\alpha=1/16$ in~\cite{zamboni}, and finally for the best known constant $\alpha=1/2$ in~\cite{cyrkra}.
It is also known that having at most $2n$ patterns of size $2\times n$ and having at most $3n$ patterns of size $3\times n$ imply periodicity~\cite{sander,cyrkra16}.
In~\cite{kariszabados} we introduced an algebraic approach that leads to the following asymptotic result: A non-periodic coloring of the grid can only have finitely many pairs $(n,m)$ such that the number of distinct $n\times m$ patterns is at most $nm$. Of course Nivat's conjecture claims
that there are no such pairs.

In this paper we continue with our algebraic approach and prove that Nivat's conjecture holds on colorings
that come from certain algebraically defined sets, including the Ledrappier subshift~\cite{ledrappier}. In fact we prove a stronger statement that
in these sets periodicity is implied if for any shape -- not necessarily a rectangle -- the number of patterns of that shape is at most
the size of the shape. Even more: if a coloring has any non-trivial annihilator over $\Z$ (terms explained below) then it is periodic.

To make the statement precise we
recall some terminology. For a finite alphabet $A$, colorings $c\in A^{\Z^2}$ of the two-dimensional grid by elements of $A$ are called (two-dimensional) \emph{configurations}. Typically we use notation $c_{\vec{n}}$ for the color $c(\vec{n})\in A$ of cell $\vec{n}\in\Z^2$.
Basic operations on $A^{\Z^2}$ are \emph{translations\/}:
for any $\vec{t}\in\Z^2$ the translation $\tau_{\vec{t}}:A^{\Z^2}\longrightarrow A^{\Z^2}$ by $\vec{t}$ is defined by $\tau_{\vec{t}}(c)_{\vec{n}}=c_{\vec{n}-\vec{t}}$, for all $c\in A^{\Z^2}$ and all $\vec{n}\in\Z^2$.
We call a configuration $c$ \emph{periodic\/} if $\tau_{\vec{t}}(c)=c$ for some non-zero $\vec{t}\in\Z^2$, and we call
$\vec{t}$ a \emph{vector of periodicity}. If there are two linearly independent vectors of periodicity then $c$ is \emph{two-periodic}.
In this case it is easy to see that there are horizontal and vertical vectors of periodicity $(k,0)$ and $(0,k)$ for some $k\neq 0$, and consequently a vector of periodicity in every rational direction. We call $c$ \emph{one-periodic} if it is periodic but not two-periodic.

Colorings $p\in A^D$ of a finite shape $D\subset \Z^d$ are called \emph{$D$-patterns}, or simply patterns. The set of $D$-patterns that appear in a configuration $c$ is denoted by $P(c,D)$, that is,
$$P(c,D)=\{\tau_{\vec{t}}(c)_{|D}\ |\ \vec{t}\in\Z^2\ \}.$$
We say that $c$ has \emph{low complexity\/} with respect to shape $D$ if $|P(c,D)|\leq |D|$,
and we call $c$ a \emph{low complexity configuration\/} if it has low complexity with respect to some finite $D$.
Otherwise $c$ is a \emph{high complexity configuration}.
Low complexity configurations come up naturally in practical setups, e.g. in crystallography
particles may attach to each other only in very limited ways. Moreover, there are interesting mathematical problems associated to them, including
the famous Nivat's conjecture discussed above:
\begin{conjecture}[Maurice Nivat 1997]
Let $c\in A^{\Z^2}$ be a two-dimensional configuration. If $c$ has low complexity with respect to some rectangle $D=\{1,\dots,n\}\times\{1,\dots,m\}$ then $c$ is periodic.
\end{conjecture}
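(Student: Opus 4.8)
The plan is to attack the conjecture through the algebraic encoding of configurations, pushing the method of \cite{kariszabados} from its asymptotic form down to the single-rectangle statement. First I would fix an injection of the finite alphabet $A$ into $\Z\subset\mathbb{C}$ and represent $c$ by the formal bivariate Laurent series $c(X,Y)=\sum_{\vec{n}\in\Z^2}c_{\vec{n}}X^{n_1}Y^{n_2}$. For a Laurent polynomial $f$ the formal product $fc$ is well defined coefficientwise, and the annihilator ideal $\Ann(c)=\{\,f\in\mathbb{C}[X^{\pm 1},Y^{\pm 1}]:fc=0\,\}$ records the linear recurrences satisfied by $c$. The first step is the key lemma of \cite{kariszabados}: low complexity with respect to $D=\{1,\dots,n\}\times\{1,\dots,m\}$ forces $\Ann(c)\neq\{0\}$. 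The mechanism is a dimension count — having at most $nm=|D|$ distinct $D$-patterns makes the translates of $c$, viewed through the window $D$, linearly dependent in a controlled way, and such a dependence assembles into a nonzero Laurent polynomial $f$ with $fc=0$. The payoff of this encoding is that periodicity is itself algebraic: $c$ has period $\vec{t}$ exactly when the binomial $X^{t_1}Y^{t_2}-1\in\Ann(c)$. The whole conjecture thus becomes the ideal-theoretic assertion that a nonzero annihilator of a finite-alphabet configuration, arising from a rectangle, must contain such a binomial.

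The second step is to control the shape of $\Ann(c)$. Calling $X^aY^b\,g(X^uY^w)$ (with $g$ univariate and $(u,w)$ primitive) a \emph{line polynomial} in direction $(u,w)$, I would invoke the structural result, in the spirit of the divisor analysis of \cite{kariszabados}, that every nonzero $\Ann(c)$ contains a product $\ell_1\cdots\ell_s$ of line polynomials in pairwise non-parallel directions $\vec{v}_1,\dots,\vec{v}_s$. Because line polynomials in distinct directions are coprime, a partial-fraction decomposition then splits $c$ over $\mathbb{C}$ into a finite sum $c=c_1+\dots+c_s$ in which each summand is annihilated by a power of a single $\ell_i$, and is therefore governed, fibre by fibre along $\vec{v}_i$, by a one-variable linear recurrence. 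When $s=1$ the finiteness of $A$ already finishes the argument: a two-sided recurrence sequence taking boundedly many values must kill its non-cyclotomic spectral parts, leaving only cyclotomic factors, and these exhibit an honest period of $c$ in direction $\vec{v}_1$. So the truly new content lies entirely in the case $s\geq 2$.

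The third and decisive step, and the one I expect to be the real obstacle, is to pass from the $\mathbb{C}$-valued decomposition back to a period of the original finite-alphabet configuration when several non-parallel directions are present. The difficulty is that the components $c_i$ need not take values in $A$, so the boundedness argument that tamed the single-direction case cannot be applied componentwise, and a sum of configurations periodic in genuinely different directions is in general not periodic. Ruling this out for a single fixed rectangle is exactly where the method must become quantitative: the plan is to bound $|P(c,D)|$ from below directly in terms of the degrees and slopes of the $\ell_i$ — tracking the Newton polygon of a minimal-degree annihilator and relating its geometry to the number of forced $D$-patterns — and to show that any non-periodic multi-direction decomposition already drives $|P(c,D)|$ above $nm$. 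The asymptotic theorem of \cite{kariszabados} carries out precisely this counting for all sufficiently large rectangles at once, leaving only finitely many pairs $(n,m)$ unaccounted for; promoting ``finitely many low-complexity rectangles'' to ``none'' for a non-periodic $c$ is the crux, and it is this quantitative gap — sharpening the lower bound on $|P(c,D)|$ so that it exceeds $nm$ already at the first rectangle witnessing a nontrivial annihilator — that I expect to be the hard part, and that is exactly what keeps Nivat's conjecture open.
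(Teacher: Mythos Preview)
There is nothing to compare: the paper does not prove this statement. It is presented there explicitly as Nivat's \emph{conjecture}, and the introduction says in so many words that it ``remains still unsolved today.'' The paper's contributions are partial results---periodicity of low-complexity configurations inside specific algebraic subshifts such as the Ledrappier and 4-dot systems---not a proof of the conjecture itself.

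Your write-up is not really a proof proposal either, and you say so yourself in the last paragraph: you correctly trace the algebraic route of \cite{kariszabados} (nontrivial annihilator from low complexity, line-polynomial products in $\Ann(c)$, decomposition into directional pieces), and then you name the exact place where the method stalls---passing from the multi-direction decomposition back to periodicity of a single finite-alphabet configuration for one fixed rectangle. That diagnosis matches the state of the art reflected in the paper: the asymptotic result (only finitely many low-complexity rectangles for a non-periodic $c$) is known, but closing the gap to ``none'' is open. So your outline is an accurate map of the known approach and its obstruction, not a proof; and the paper offers no proof to set it against.
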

\noindent
The analogous claim  in dimensions higher than two fails, as does an analogous claim in two dimensions
for many other shapes than rectangles~\cite{cassaigne}.

To study the conjecture algebraically we replace the colors by integers, or elements of some other integral domain $R$, and express the
configuration $c$ as a formal power series $c(X,Y)$ over two variables $X$ and $Y$ in which the coefficient of monomial $X^iY^j$ is $c_{i,j}$, for all $i,j\in\Z$. Note that the exponents of the variables range from $-\infty$ to $+\infty$. In the following also polynomials may have negative powers of variables so all polynomials considered are actually Laurent polynomials. Let us denote by $R[X^{\pm 1}, Y^{\pm 1}]$ and $R[[X^{\pm 1}, Y^{\pm 1}]]$ the sets of such polynomials and power series, respectively, with coefficients in domain $R$.
We call a power series $c\in R[[X^{\pm 1}, Y^{\pm 1}]]$ \emph{finitary} if its coefficients take only finitely many different values. Since we color the grid using finitely many colors,
configurations are identified with finitary power series.

Multiplying configuration $c\in R[[X^{\pm 1}, Y^{\pm 1}]]$
by a monomial corresponds to translating it, and the periodicity of the configuration by vector $\vec{t}=(n,m)$ is then
equivalent to $(X^nY^m-1)c=0$, the zero power series. More generally, we say that polynomial $f\in R[X^{\pm 1}, Y^{\pm 1}]$ \emph{annihilates} power series $c$ if the formal product $fc$ is the zero power series.
The set of polynomials that annihilates a power series is a polynomial ideal, and is denoted by
\[ \Ann_R(c) = \{ f\in R[X^{\pm 1}, Y^{\pm 1}] ~|~ fc=0 \} .\]

We observed in~\cite{kariszabados} that, in the case $R=\Z$, if a configuration
has low complexity with respect to some shape then it is annihilated by some non-zero polynomial $f\neq 0$. The proof works unchanged
for an arbitrary field $R$.

\begin{lemma}[\cite{kariszabados}]
\label{th:low_complexity}
Let $R$ be a field or $R=\Z$. Let $c\in R[[X^{\pm 1}, Y^{\pm 1}]]$ be a low complexity configuration.
Then $\Ann_R(c)$ contains a non-zero polynomial.
\end{lemma}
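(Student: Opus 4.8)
The plan is to read a nonzero annihilator straight off the low‑complexity hypothesis, avoiding any case distinction. Fix a finite shape $D$ with $|P(c,D)|\le|D|$, and for $\vec{t}\in\Z^2$ let $v_{\vec t}=(c_{\vec t+\vec d})_{\vec d\in D}\in R^D$ be the pattern of $c$ in the translated window $\vec t+D$; by hypothesis the family $\{v_{\vec t}:\vec t\in\Z^2\}$ takes at most $|D|$ distinct values. The obvious attempt is to find a nonzero linear form on $R^D$ that vanishes on every $v_{\vec t}$, since such a form would at once yield an annihilator of $c$ supported on $D$; but this can fail, as $|D|$ distinct vectors may already span $R^D$.

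The first genuine step I would take is therefore to pass to \emph{differences} of patterns. Let $U\subseteq R^D$ be spanned by all $v_{\vec t}-v_{\vec s}$. Writing $p_0,\dots,p_m$ for the distinct patterns (so $m+1=|P(c,D)|\le|D|$), every difference lies in the span of $p_1-p_0,\dots,p_m-p_0$, hence $\dim U\le|P(c,D)|-1<|D|$; so $U$ is a proper subspace of $R^D$ (over the fraction field of $R$, when $R=\Z$). Pick a nonzero linear form $\ell=(b_{\vec d})_{\vec d\in D}$ vanishing on $U$ — with integer coefficients in the case $R=\Z$, after clearing denominators. Vanishing on $U$ says precisely that $\vec t\mapsto\ell(v_{\vec t})$ is constant, equal to some $\lambda\in R$.

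Now I would translate this back into power‑series language: put $f=\sum_{\vec d\in D}b_{\vec d}X^{-d_1}Y^{-d_2}\in R[X^{\pm1},Y^{\pm1}]$, which is nonzero since $\ell\ne0$. A quick coefficient computation gives $(fc)_{\vec n}=\sum_{\vec d\in D}b_{\vec d}\,c_{\vec n+\vec d}=\ell(v_{\vec n})=\lambda$ for every $\vec n$, i.e. $fc=\lambda\ones$ where $\ones$ is the power series all of whose coefficients are $1$. The last step absorbs the constant: since $X\ones=\ones$ we have $(X-1)\ones=0$, so $\big((X-1)f\big)c=(X-1)(fc)=0$, while $(X-1)f\ne0$ because $R[X^{\pm1},Y^{\pm1}]$ is an integral domain. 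Thus $(X-1)f$ is a nonzero element of $\Ann_R(c)$.

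I expect the crux to be exactly this combination — working with differences of patterns, then killing the leftover constant by multiplying with $X-1$ — because it is what rescues the argument in the problematic situation where the (at most $|D|$) patterns of $c$ span the whole of $R^D$; the naive linear‑form argument handles only the easier case where they do not. Everything else is routine, and the sole difference between the field case and $R=\Z$ is the harmless clearing of denominators when producing $\ell$.
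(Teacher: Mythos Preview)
Your argument is correct. The paper itself does not prove this lemma; it is quoted from~\cite{kariszabados} with the remark that the proof there ``works unchanged for an arbitrary field $R$''. Your route --- passing to differences of the at most $|D|$ pattern vectors to force a proper subspace of $R^D$, extracting a nonzero linear form $\ell$, and then killing the residual constant $\lambda$ by multiplying with $X-1$ --- is precisely the standard argument in that reference (often phrased equivalently as: the augmented vectors $(v_{\vec t},1)\in R^{D}\times R$ are at most $|D|$ in number, hence lie in a proper subspace of a $(|D|+1)$-dimensional space). So there is nothing to contrast: you have reproduced the intended proof, including the one subtlety you flag, namely that the $|D|$ patterns may span $R^D$ so that a direct linear form on the patterns need not exist.
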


One of the main results of~\cite{kariszabados} states that, in the case $R=\Z$, if a
configuration $c$ is annihilated by a non-zero polynomial then it has annihilators of particularly nice form:
\begin{theorem}[\cite{kariszabados}]
  \label{th:decompo}
  Let $c\in\Z[[X^{\pm 1}, Y^{\pm 1}]]$ be a configuration (a finitary power series) annihilated by some non-zero polynomial.\\
  Then there exists non-zero $(i_1, j_1), \hdots, (i_m, j_m)\in\Z^2$ such that
  \[ (X^{i_1}Y^{j_1} - 1) \cdots (X^{i_m}Y^{j_m} - 1) \in \Ann_{\Z}(c) .\]
\end{theorem}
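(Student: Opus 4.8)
\medskip
\noindent\textbf{Proof plan.}
The plan is to prove the statement over $\mathbb{C}$ and descend for free: the polynomials $X^iY^j-1$ have integer coefficients, so an identity $(X^{i_1}Y^{j_1}-1)\cdots(X^{i_m}Y^{j_m}-1)\,c=0$ is a system of $\mathbb{Z}$-linear equations in the integer coefficients of $c$, and it holds over $\mathbb{Z}$ as soon as it holds over $\mathbb{C}$. So I would view $c$ as a finitary element of $\mathbb{C}[[X^{\pm1},Y^{\pm1}]]$ with $\Ann_{\mathbb{C}}(c)\neq 0$ (the given integer annihilator still annihilates), assume $c\neq 0$, and study the variety $V:=V(\Ann_{\mathbb{C}}(c))\subseteq(\mathbb{C}^{*})^{2}$, which is nonempty of dimension $\le 1$.

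Everything then rests on one claim: \emph{every irreducible component of $V$ is a coset of a subtorus of $(\mathbb{C}^{*})^{2}$ by a torsion point} --- either a point $(\zeta_1,\zeta_2)$ with $\zeta_1,\zeta_2$ roots of unity, or a curve $\{x^iy^j=\zeta\}$ with $(i,j)$ primitive and $\zeta$ a root of unity. Granting it, each component is contained in the zero set of a single difference polynomial ($X^n-1$ for a point of order $n$, and $(X^iY^j)^n-1$ for a curve with $\zeta$ of order $n$); taking $\Pi$ to be the product of these over the finitely many components of $V$ gives $V(\Pi)\supseteq V$, hence $\Pi\in\sqrt{\Ann_{\mathbb{C}}(c)}$ by the Nullstellensatz, hence $\Pi^{N}\in\Ann_{\mathbb{C}}(c)$ for some $N$ --- and $\Pi^{N}$ is again a product of difference polynomials, so the first paragraph finishes. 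Two easy facts are used throughout: $p\,c$ is finitary whenever $c$ is (it is a fixed finite $\mathbb{C}$-linear combination of translates of $c$), so reductions stay inside the finitary world; and a bi-infinite sequence over a field that takes finitely many values and satisfies a linear recurrence is periodic, with period bounded by the recurrence --- so a nonzero annihilator $p(X)$ of a finitary $c$ upgrades to $X^{m}-1\in\Ann_{\mathbb{C}}(c)$, and symmetrically for $Y$.

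To prove the claim I would treat the two kinds of component separately. \emph{Point components.} Given a $0$-dimensional component $P$, I would first peel off the curve components: for each irreducible $g$ cutting out a curve component and $N$ large, $g^{N}c$ is finitary, is nonzero (since $g$ does not vanish at $P$), and has $\Ann_{\mathbb{C}}(g^{N}c)=(\Ann_{\mathbb{C}}(c):g^{N})$, which removes exactly the $(g)$-primary part. This leaves a nonzero finitary $\tilde c$ with $V(\Ann_{\mathbb{C}}(\tilde c))$ \emph{finite}, so $\mathbb{C}[X^{\pm1},Y^{\pm1}]/\Ann_{\mathbb{C}}(\tilde c)$ is finite-dimensional over $\mathbb{C}$; hence $\Ann_{\mathbb{C}}(\tilde c)$ contains nonzero polynomials in $X$ alone and in $Y$ alone, so by the periodicity fact $\tilde c$ is two-periodic and $P$ is a torsion point. \emph{Curve components.} Given a curve component $V(g)$, write a nonzero annihilator as $g^{k}h$ with $g\nmid h$; then $c':=hc$ is finitary and nonzero, and $\Ann_{\mathbb{C}}(c')=(g)$ exactly (if $qc'=0$ then $qh\in\Ann_{\mathbb{C}}(c)\subseteq(g)$, so $g\mid q$). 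So I may assume $\Ann_{\mathbb{C}}(c)=(g)$ with $g$ irreducible and must show $g$ is a binomial $X^iY^j-\zeta$ with $\zeta$ a root of unity. If $g$ were a line polynomial $\phi(X^{\vec v})$, its finitary solutions would be $\vec v$-periodic, forcing $X^{\vec v m}-1\in(g)$ and hence $g$ to be such a binomial; so the remaining, hard case is that the Newton polygon of $g$ is two-dimensional. After an $\mathrm{SL}_2(\mathbb{Z})$ change of variables (which preserves both ``line polynomial'' and ``difference polynomial'') I would write $g=g_d(Y)X^{d}+\dots+g_0(Y)$ with $d\ge 1$, $g_0,g_d\neq 0$, and $c=\sum_i X^i C_i$ with $C_i\in\mathbb{C}[[Y^{\pm1}]]$ finitary; the relation $gc=0$ says $(C_i)$ obeys the linear recurrence whose characteristic polynomial is $g$ viewed over $\mathbb{C}(Y)$, and the contradiction should come from classifying the finitary column sequences this allows.

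I expect this last point --- that an irreducible polynomial that is \emph{not} a line polynomial can never be the full annihilator of a nonzero finitary configuration --- to be the main obstacle, and the one place where ``finitely many values'' is used essentially rather than mere boundedness (the bounded analogue is false). Morally it is an effective instance of the principle that a subvariety of a torus supporting such a rigid structure must be a torsion coset: I would try to show the finitary solutions of the column recurrence are built from eigen-solutions $C_i=\mu^i v$ with $\mu$ a root of unity and $v$ finitary, which would force $c$ to be two-periodic and $\Ann_{\mathbb{C}}(c)$ to strictly contain $(g)$ unless $g$ is degenerate. The reduction to $\mathbb{C}$, the Nullstellensatz packaging, the point case, and the one-variable periodicity lemma I expect to be routine.
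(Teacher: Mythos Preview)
The paper does not prove this theorem: it is quoted from \cite{kariszabados} and used as a black box. So there is no ``paper's own proof'' to compare against; what follows is an assessment of your proposal on its merits.

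Your overall architecture is the right one and is essentially the strategy of the original Kari--Szabados argument: pass to $\mathbb{C}$, study $V(\Ann_{\mathbb{C}}(c))\subseteq(\mathbb{C}^{*})^{2}$, show every irreducible component is a torsion coset of a subtorus, and then package via the Nullstellensatz into a product of difference polynomials. Your descent to $\Z$, the treatment of point components via finite-dimensionality, and the one-variable periodicity lemma are all fine. There is a small slip in the curve reduction: from $\Ann_{\mathbb{C}}(c)\subseteq(g)$ and $c'=hc$ you only get $(g^{k})\subseteq\Ann_{\mathbb{C}}(c')\subseteq(g)$, not equality with $(g)$; this is harmless, since replacing $c'$ by $g^{j}c'$ for the largest $j$ with $g^{j}c'\neq 0$ gives a nonzero finitary configuration annihilated by $g$, which is all you need.

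The genuine gap is exactly where you flag it: the assertion that an irreducible $g$ which is \emph{not} a line polynomial cannot annihilate a nonzero finitary configuration. Your sketch (``finitary solutions of the column recurrence are built from eigen-solutions $C_{i}=\mu^{i}v$ with $\mu$ a root of unity'') is a hope, not an argument, and the naive spectral decomposition over $\mathbb{C}(Y)$ does not obviously respect finitariness. In the original proof this step is the technical heart: one shows that for a suitable edge direction of the Newton polygon of $g$ the recurrence is \emph{expansive}, and then a pigeonhole argument on the finitely many values along consecutive columns forces periodicity of $c$ in that direction, yielding a line-polynomial annihilator that $g$ cannot divide. Without supplying this (or an equivalent) argument, the proposal is an outline rather than a proof: all the routine pieces are in place, but the one non-routine piece is missing.
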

\noindent
Unlike Lemma~\ref{th:low_complexity} this theorem does not generalize to all cases when $R$ is a field.

For a polynomial $f = \sum a_{i,j} X^iY^j$, we call $\text{supp}(f) = \{ (i,j) ~|~ a_{i,j}\neq 0 \}$ its
\emph{support}. A \emph{line polynomial} is a polynomial with its terms aligned all on the same line: $f$ is a line polynomial in direction $\vec{u}\in\Z^2\setminus\{\vec{0}\}$ if and only if $\text{supp}(f)$ contains at least two elements and for some $\vec{n}\in\Z^2$ we have $\text{supp}(f)\subseteq \{\vec{n}+r\vec{u}\ |\ r\in\Q\}$.
Note that the annihilator provided by Theorem~\ref{th:decompo} is a product of line polynomials. A central feature of line polynomials is that any
configuration that is annihilated by a line polynomial is periodic in the direction of the line polynomial~\cite{kariszabados}.

If $R$ is a finite field and if $f\in R[X^{\pm 1}, Y^{\pm 1}]$ is a non-zero polynomial then we define the set
$$
X_f=\{ c\in R[[X^{\pm 1}, Y^{\pm 1}]]\ |\ fc=0\}
$$
of
all configurations that $f$ annihilates
and call it the \emph{algebraic subshift} defined by $f$. It is a subshift of finite type (SFT, see~\cite{lindmarcus} for the symbolic dynamics terminology). We have that $c\in X_f\Longleftrightarrow f\in\Ann_R(c)\Longleftrightarrow fc=0$.

By Lemma~\ref{th:low_complexity} all low complexity configurations belong to algebraic subshifts when the symbols are renamed as elements of
a finite field. It is thus enough to consider Nivat's conjecture on elements of algebraic subshifts: if the conjecture is false then there is a counter example configuration
that belongs to an algebraic subshift. This is the approach taken in this work. We consider various algebraic subshifts and see whether low complexity implies periodicity
in these subshifts. When this is the case we say that the subshift has the \emph{generalized Nivat property}.
We prove that an algebraic subshift $X_f$ has the generalized Nivat property if the defining polynomial $f$ does not have line polynomial factors in two different directions.

The paper is organized as follows. In Section~\ref{sec:ledrappier} we consider as a particular example the well known Ledrappier subshift,
also known as the 3-dot system, and prove that any low complexity configuration in the Ledrappier subshift is periodic. In Section~\ref{sec:general}
we generalize this result to all algebraic subshifts where the defining polynomial does not have line polynomial factors in more than one direction. This includes, for example, the
space-time diagrams of all additive cellular automata over finite fields. In Section~\ref{sec:fourdot} we consider the 4-dot system defined by polynomial $(1+X)(1+Y)$
over $\F$ and prove that it also has the generalized Nivat property although the defining polynomial has horizontal and vertical line polynomial factors,
while the analogous system defined by $(1+X^2)(1+Y^2)$ does not have the property.

\section{The Ledrappier Subshift}
\label{sec:ledrappier}

The Ledrappier subshift, or the 3-dot system,  consists of the configurations over the binary alphabet $\F$
that are annihilated by the polynomial $1+X+Y \in \F[X^{\pm 1}, Y^{\pm 1}]$.

\begin{definition}[\cite{ledrappier}]
  The \emph{Ledrappier subshift} $L$ is the algebraic subshift $X_{f_L}$ over $\F$ defined by the annihilator $f_L = 1+X+Y$.
\end{definition}

In this work we relate the generalized Nivat property to the number of line polynomial factors of the defining annihilator.
It turns out that $f_L$ has none.

A visual way of seeing if a polynomial has a line polynomial factor is to look at the shape of the convex hull of its support: the convex hull has
parallel sides in the directions of its line polynomial factors.
To be more precise, we need some definitions of discrete geometry.
The closed half plane in a direction $\vec{v}\in\Z^2\setminus\{\vec{0}\}$ is the set
$\overline{H}_{\vec{v}}=\{\vec{x}\in\Z^2 |\ \vec{x}\cdot\vec{v}\geq 0\}$, and the open half plane
$H_{\vec{v}}$ is defined analogously to contain those $\vec{x}\in\Z^2$ that satisfy $\vec{x}\cdot\vec{v} > 0$. The boundary of the half plane
is $\overline{H}_{\vec{v}}\setminus H_{\vec{v}}$.
We say that a finite set $D\subseteq\Z^2$ has an \emph{outer edge} perpendicular to $\vec{v}\in\Z^2\setminus\{\vec{0}\}$ if there is $\vec{x}\in D$ such that $D\subseteq \vec{x}+\overline{H}_{\vec{v}}$ and there are at least two elements
of $D$ on the boundary $\vec{x}+(\overline{H}_{\vec{v}}\setminus H_{\vec{v}})$. See \cref{fig:halfplane} for an illustration.

\begin{figure}[ht]
\begin{center}
\includegraphics[scale=0.4]{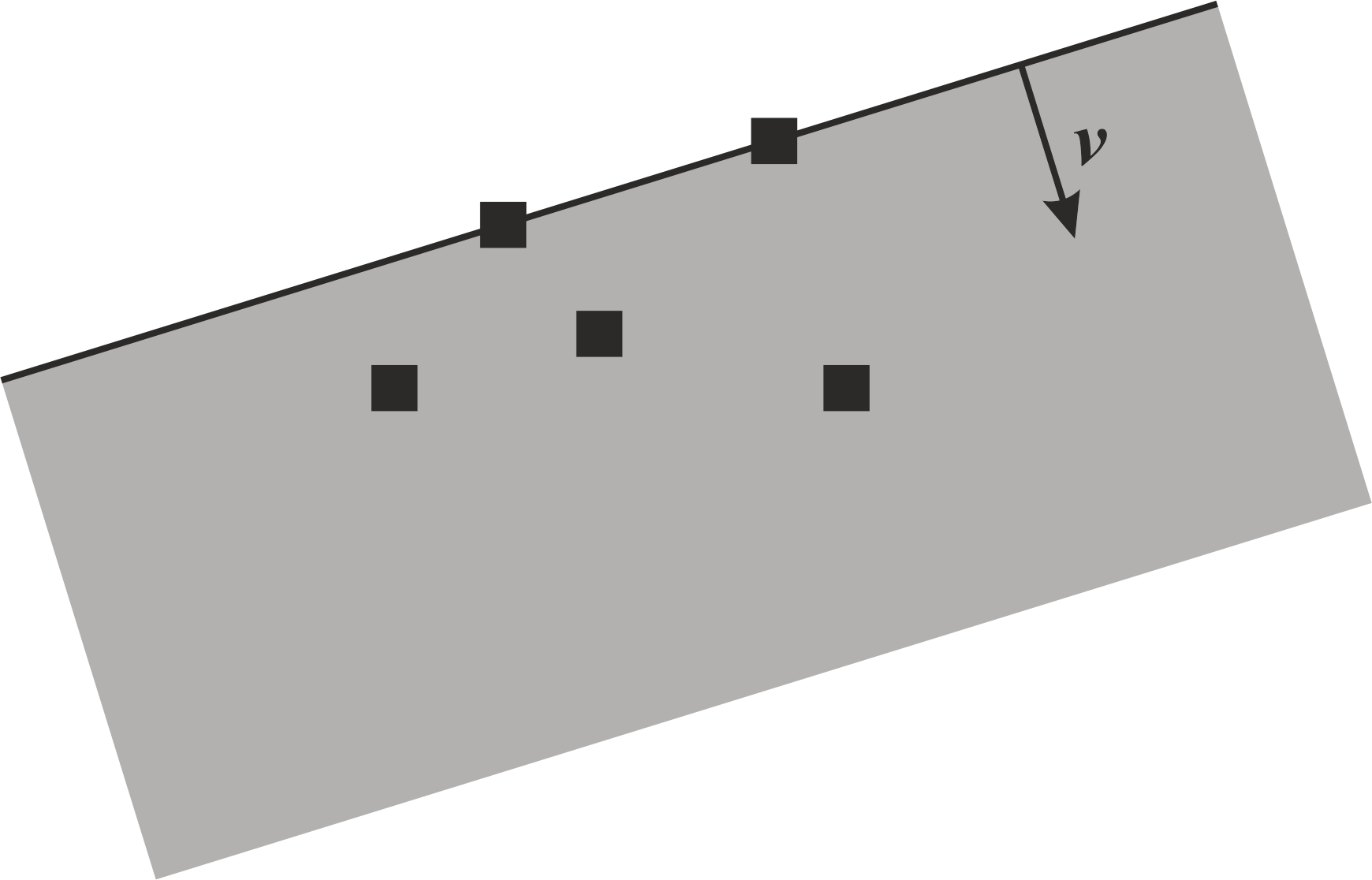}
\end{center}
  \caption{The set of black cells has an outer edge perpendicular to vector $\vec{v}$.}
  \label{fig:halfplane}
\end{figure}

\begin{lemma}
\label{lem:halfplane}
Let $g,h$ be non-zero polynomials such that $\text{supp}(g)$ has an outer edge perpendicular to $\vec{v}$. Then also
$\text{supp}(gh)$ has an outer edge perpendicular to $\vec{v}$.
\end{lemma}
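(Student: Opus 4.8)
The plan is to reduce to a one-dimensional statement by projecting along the direction $\vec v$. Fix coordinates so that $\vec v$ points "downward"; concretely, choose a primitive vector $\vec u$ with $\vec u\cdot\vec v=0$ (the direction of the outer edge of $\operatorname{supp}(g)$) and complete it to a basis of $\Z^2$, writing every exponent vector as $a\vec u + b\vec w$ where $\vec w\cdot\vec v>0$. The value $b=\vec x\cdot\vec v$ (up to normalization) is the "height" of a monomial; multiplication of monomials adds heights. For a polynomial $p$, let $m(p)$ denote the minimal height occurring in $\operatorname{supp}(p)$, and let $p_{\min}$ be the "bottom layer", i.e. the sub-sum of all terms of $p$ of height exactly $m(p)$. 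The hypothesis that $\operatorname{supp}(g)$ has an outer edge perpendicular to $\vec v$ says precisely that $g_{\min}$ has at least two terms, and these terms all lie on a line in direction $\vec u$.

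The key algebraic observation is that heights are additive and the bottom layer is multiplicative: $m(gh)=m(g)+m(h)$ and $(gh)_{\min}=g_{\min}\,h_{\min}$. This is the one-variable fact that the lowest-degree part of a product of Laurent polynomials in a single variable is the product of the lowest-degree parts, valid because $R$ is an integral domain (no cancellation of the leading/trailing coefficient product). Granting this, I would argue as follows. Since $g_{\min}$ and $h_{\min}$ are supported on lines in direction $\vec u$ (for $g_{\min}$ by hypothesis, for $h_{\min}$ because any nonzero one-layer polynomial is automatically supported on a single translate of the $\vec u$-line), their product $(gh)_{\min}=g_{\min}h_{\min}$ is again supported on a translate of the $\vec u$-line — products of line polynomials in a fixed direction stay in that direction, and the support of $g_{\min}h_{\min}$ has size at least that of $g_{\min}$ (in fact it is a sumset, so $|\operatorname{supp}(g_{\min}h_{\min})|\ge |\operatorname{supp}(g_{\min})|+|\operatorname{supp}(h_{\min})|-1\ge 2$). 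Hence $(gh)_{\min}$ has at least two terms, all lying on a common $\vec u$-line, which is exactly the statement that $\operatorname{supp}(gh)$ has an outer edge perpendicular to $\vec v$: every monomial of $gh$ has height $\ge m(gh)$, so $gh$ lies in the translated closed half-plane $\vec x+\overline H_{\vec v}$ for any bottom-layer exponent $\vec x$, and the $\ge 2$ bottom-layer terms sit on its boundary.

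The main thing to be careful about — the only real obstacle — is the multiplicativity of the bottom layer, i.e. that no cancellation destroys $g_{\min}h_{\min}$. I would verify this by passing to the one-variable ring: the height function realizes $R[X^{\pm1},Y^{\pm1}]$ as (a localization of) $R[\vec u^{\pm1}][\vec w^{\pm1}]$, and $(\,\cdot\,)_{\min}$ in the $\vec w$-variable is the "lowest coefficient" map, which is multiplicative over the integral domain $R[\vec u^{\pm1}]$ precisely because that ring is itself an integral domain, so the product of the two nonzero lowest coefficients is nonzero. The rest — translating between the half-plane/outer-edge language and the height/bottom-layer language, and checking the $\vec u$-line direction is preserved — is routine bookkeeping with the chosen basis. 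A final remark: one should note the mild degenerate case where $h$ is a monomial, in which case $gh$ is just a translate of $g$ and the claim is immediate; the argument above handles it uniformly since then $h_{\min}=h$ has one term.
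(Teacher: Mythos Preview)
Your proof is correct and follows essentially the same approach as the paper: your bottom layers $g_{\min}$ and $h_{\min}$ are exactly the paper's boundary polynomials $\alpha$ and $\beta$, and your multiplicativity observation $(gh)_{\min}=g_{\min}h_{\min}$ is the content of the paper's claim that $\operatorname{supp}(gh-\alpha\beta)\subseteq \vec x+\vec y+H_{\vec v}$ with $\alpha\beta$ a line polynomial. You are somewhat more explicit than the paper about why $\alpha\beta$ has at least two terms (invoking the integral domain hypothesis and a sumset bound), which the paper leaves implicit.
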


\begin{proof}
Let vector $\vec{u}\in\Z^2\setminus\{\vec{0}\}$ be perpendicular to $\vec{v}$. By the hypotheses of the lemma
there are $\vec{x}\in \Z^2$ and a line polynomial $\alpha$ in the direction $\vec{u}$ such that
$\text{supp}(g)\subseteq \vec{x}+\overline{H}_{\vec{v}}$ and
$\text{supp}(g-\alpha)\subseteq \vec{x}+H_{\vec{v}}$. Here $\alpha$ contains the terms of $g$ along the boundary of the half plane $\vec{x}+H_{\vec{v}}$.
Analogously, for any non-zero polynomial $h$ there exists $\vec{y}\in \Z^2$ and polynomial $\beta\neq 0$ that is either
a monomial or a line polynomial in the direction $\vec{u}$ such that
$\text{supp}(h)\subseteq \vec{y}+\overline{H}_{\vec{v}}$ and
$\text{supp}(h-\beta)\subseteq \vec{y}+H_{\vec{v}}$. But then
$\text{supp}(gh)\subseteq \vec{x}+\vec{y}+\overline{H}_{\vec{v}}$
and $\text{supp}(gh-\alpha\beta)\subseteq \vec{x}+\vec{y}+H_{\vec{v}}$.
Because $\alpha\beta$ is a line polynomial in the direction $\vec{u}$,
this proves that the support of
$gh$ has an outer edge perpendicular to $\vec{v}$.
\end{proof}

\begin{corollary}
If $f\neq 0$ has a line polynomial factor in the direction $\vec{u}$
then $\text{supp}(f)$ has outer edges perpendicular to $\vec{v}$ and $-\vec{v}$, where $\vec{v}$ is a
vector perpendicular to $\vec{u}$.
\end{corollary}

\begin{proof}
A line polynomial $g$ in the direction $\vec{u}$ has outer edges perpendicular to $\vec{v}$ and $-\vec{v}$. The claim then follows directly from Lemma~\ref{lem:halfplane}.
\end{proof}

Because the convex hull of $\text{supp}(f_L)$ is a triangle, it does not have parallel outer edges and therefore
no line polynomial factors.

\begin{corollary}
  \label{th:ledrappier_no_line_poly}
  Polynomial $f_L = 1+X+Y$ has no line polynomial factors.
\end{corollary}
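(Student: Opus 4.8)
The plan is to apply the preceding Corollary to $f_L = 1+X+Y$, so the whole argument reduces to showing that $\text{supp}(f_L)$ has no pair of parallel outer edges. First I would observe that $\text{supp}(f_L) = \{(0,0),(1,0),(0,1)\}$, a set of exactly three non-collinear points, so its convex hull is a (non-degenerate) triangle. A triangle has exactly three edges, each spanned by two of the three vertices, and no two of these three edges are parallel: the edge directions are $(1,0)$, $(0,1)$, and $(1,-1)$ (up to sign), which are pairwise non-proportional. Hence for no direction $\vec{v}$ do both $\vec{v}$ and $-\vec{v}$ give outer edges of $\text{supp}(f_L)$ — in fact each outer edge occurs for a unique $\vec{v}$ up to positive scaling, and the opposite normal $-\vec{v}$ cuts off only a single vertex.

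Now I would argue by contraposition using the Corollary: if $f_L$ had a line polynomial factor in some direction $\vec{u}$, then $\text{supp}(f_L)$ would have outer edges perpendicular to both $\vec{v}$ and $-\vec{v}$ for $\vec{v}\perp\vec{u}$. But "outer edge perpendicular to $\vec{v}$" means (by definition) that at least two points of $\text{supp}(f_L)$ lie on a supporting line with inner normal $\vec{v}$; having this simultaneously for $\vec{v}$ and $-\vec{v}$ forces at least two points on each of two parallel supporting lines, i.e. a pair of parallel edges in the convex hull. A triangle has no such pair, contradiction. Therefore $f_L$ has no line polynomial factor, which is exactly the claim of \cref{th:ledrappier_no_line_poly}.

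I expect no real obstacle here — the statement is essentially an immediate instance of the Corollary combined with the elementary geometric fact that a triangle has no parallel sides. The only point requiring a word of care is making sure the three support points are genuinely non-collinear (they are: $(1,0)$ and $(0,1)$ are not scalar multiples relative to the origin), so that the convex hull is two-dimensional and the edge-direction count is exactly three; and checking that the three edge directions are pairwise non-parallel, which is the short computation above. This is already sketched in the sentence preceding the statement ("Because the convex hull of $\text{supp}(f_L)$ is a triangle, it does not have parallel outer edges"), so the formal proof merely records that observation.
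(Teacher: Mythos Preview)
Your proposal is correct and follows exactly the paper's own approach: the paper simply notes that the convex hull of $\text{supp}(f_L)$ is a triangle, hence has no parallel outer edges, and invokes the preceding Corollary. Your write-up just spells out this one-line observation in more detail.
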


\noindent
{\bf Remark.} In fact, it is very easy to see that $f_L$ is an irreducible polynomial, which directly implies Corollary~\ref{th:ledrappier_no_line_poly}.
\medskip

\noindent
Now we can prove that the Ledrappier subshift has the generalized Nivat property.

\begin{theorem}
  \label{th:ledrappier}
  Any low complexity $c\in X_{f_L}$ is periodic.
\end{theorem}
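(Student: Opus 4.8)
The plan is to exploit the two facts already in hand: a low complexity configuration $c\in X_{f_L}$ has a non-zero annihilator over $\F$ by Lemma~\ref{th:low_complexity}, and the defining polynomial $f_L=1+X+Y$ is irreducible over $\F$ (Corollary~\ref{th:ledrappier_no_line_poly} and the Remark). The first step is to understand the full annihilator ideal $\Ann_{\F}(c)$. Since $c\in X_{f_L}$, we have $f_L\in\Ann_{\F}(c)$, and since $c$ is low complexity there is some non-zero $g\in\Ann_{\F}(c)$. If $f_L\nmid g$, then because $f_L$ is irreducible in the Laurent polynomial ring (a UFD), $f_L$ and $g$ are coprime; I would then argue that a configuration annihilated by two coprime polynomials must be annihilated by polynomials of a very restricted form — ideally showing it is annihilated by a monomial-type relation forcing two-periodicity, or more directly that the only finitary power series killed by two coprime Laurent polynomials over a field is highly constrained. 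The cleanest route: two coprime polynomials over the fraction field generate an ideal containing a non-zero element of $\F[X^{\pm1}]$ and a non-zero element of $\F[Y^{\pm1}]$ (eliminate $Y$, then eliminate $X$, via resultants $\Res$ and $\ResY$), and such one-variable annihilators force $c$ to be two-periodic, hence periodic.

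If instead $f_L\mid g$, one replaces $g$ by $g/f_L$ and repeats; so by induction we may assume $\Ann_{\F}(c)$ consists precisely of multiples of powers of $f_L$, i.e. the radical of the ideal is $(f_L)$. The heart of the argument is then to rule out the case where $c$ is genuinely "generic" in $X_{f_L}$ and still low complexity. Here I would bring in the integer lift: realize $c$ as a configuration over $\Z$ (colors $0$ and $1$) and apply Theorem~\ref{th:decompo}, which gives a product of line polynomials $(X^{i_1}Y^{j_1}-1)\cdots(X^{i_m}Y^{j_m}-1)\in\Ann_{\Z}(c)$. Reducing mod $2$, this product lies in $\Ann_{\F}(c)$. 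Now each factor $X^{i_k}Y^{j_k}-1$ is a line polynomial, so by the fact quoted from~\cite{kariszabados} — a configuration annihilated by a line polynomial is periodic in that direction — if some single factor already annihilates $c$ we are done. Otherwise the product of these line polynomials is divisible, in $\F[X^{\pm1},Y^{\pm1}]$, by $f_L$ (since it lies in $\Ann_{\F}(c)$ and, modulo the radical computation above, $f_L$ must divide it). But $f_L=1+X+Y$ is irreducible and is \emph{not} a line polynomial (its support is a triangle), so $f_L$ cannot divide a product of line polynomials — contradiction. Hence some line-polynomial factor $X^{i_k}Y^{j_k}-1$ by itself annihilates $c$, and $c$ is periodic.

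The step I expect to be the main obstacle is making rigorous the passage "$\Ann_{\F}(c)$ has radical $(f_L)$, or else $c$ is two-periodic." Concretely, one must show that if $g\in\Ann_{\F}(c)$ is non-zero and coprime to $f_L$, then $c$ is (two-)periodic — equivalently, that a non-trivial annihilator together with $f_L$ being coprime forces a one-variable annihilator. This uses an elimination/resultant argument over the field $\F$ together with the finitary hypothesis on $c$ (to exclude pathological power series solutions of $pc=0$ with $p\in\F[X^{\pm1}]$ non-zero that are not periodic — in fact over a field a one-variable relation $p(X)c=0$ does force periodicity in $X$ for finitary $c$, which is the key lemma to nail down). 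Once that is in place, the combinatorial contradiction "irreducible non-line polynomial cannot divide a product of line polynomials" is immediate from unique factorization, and the theorem follows.
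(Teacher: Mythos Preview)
Your approach is correct and matches the paper's: show that if $c$ is not periodic then $\Ann_{\F}(c)=(f_L)$, then pull the product of line polynomials from Theorem~\ref{th:decompo} down mod~$2$ into $(f_L)$ and get a contradiction with irreducibility of $f_L$. Two small points. First, the step ``replace $g$ by $g/f_L$ and repeat'' is wrong as written --- $g/f_L$ need not annihilate $c$ --- and it is also unnecessary: since $f_L$ is irreducible, the dichotomy is simply that a given non-zero $g\in\Ann_{\F}(c)$ is either coprime to $f_L$ (and your resultant argument gives two-periodicity) or divisible by $f_L$; the contrapositive yields $\Ann_{\F}(c)\subseteq(f_L)$ directly, hence equality. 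Second, to invoke Theorem~\ref{th:decompo} you need a non-zero annihilator over $\Z$, not just over $\F$; Lemma~\ref{th:low_complexity} with $R=\Z$ supplies it, but you only cited the $\F$ case.

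The one methodological difference is how you establish $\Ann_{\F}(c)=(f_L)$: the paper does it by the explicit substitution $X=1+Y+f_L$ to eliminate $X$ and reduce any proper annihilator to $\alpha f_L+\beta(Y)$, whereas you use resultants. Your route is in fact the one the paper adopts for the general result (Lemma~\ref{th:two_ann} and Theorem~\ref{th:main_th}), so you have essentially written the specialization of the Section~\ref{sec:general} proof to the Ledrappier case.
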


\begin{proof}
In this proof we are going to need annihilators of $c$ over $\F$ and over $\Z$. We interpret $c$ as a configuration over $\Z$ using the renaming $\F\longrightarrow \Z$ that maps $0_\F\mapsto 0_\Z$ and  $1_\F\mapsto 1_\Z$. We write
$\Ann_\Z(c)$ for the annihilator ideal of the configuration over $\Z$ obtained after this renaming.
We are going to prove the following stronger statement: if $\Ann_\Z(c)$ contains a non-zero polynomial then $c$ is  periodic. The result then follows from Lemma~\ref{th:low_complexity}.

Let $c\in L$ be non-periodic. We first prove that $\Ann_\F(c)$ is the principal ideal generated by $f_L$, that is,
for every $g\in \Ann_\F(c)$ there exists $\alpha\in\F[X^{\pm 1}, Y^{\pm 1}]$ such that $g=\alpha f_L$.
First, let $g\in \Ann_\F(c)$ be a proper polynomial which means that all exponents of variables are non-negative.
Because $X = 1+Y+f_L$ we can eliminate variable $X$: there exists a polynomial $\alpha(X,Y)$ and a polynomial $\beta(Y)$ in variable $Y$ only such that $g = \alpha(X,Y)f_L+\beta(Y)$.
Because both $f_L$ and $g$ are annihilators of $c$, $\beta(Y)\in\Ann_\F(c)$ as well.

If $\beta\neq 0$ then it is either a single monomial (in which case $c=0$) or a line polynomial annihilator of $c$. Any configuration with a line polynomial annihilator is periodic in the direction of the polynomial. We conclude that $\beta=0$ so that $g = \alpha f_L$, as claimed.

Consider then arbitrary $g\in \Ann_\F(c)$ with possibly some negative exponents.
Because $g=X^iY^jg'$ for some $i,j\in\Z$ and a proper polynomial $g'\in\Ann_\F(c)$, we conclude that also in this case  $g$ is a multiple of $f_L$, and therefore we have that
 $\Ann_\F(c)$ is the principal ideal generated by $f_L$.

Now it remains to show that $\Ann_\Z(c)$, the set of annihilators over $\Z$, is trivial. Suppose by contrary that
there is a non-zero annihilator in $\Ann_\Z(c)$. By Theorem~\ref{th:decompo}
there exists non-zero $(i_1, j_1), \hdots, (i_m, j_m)\in\Z^2$ such that
$(X^{i_1}Y^{j_1} - 1) \cdots (X^{i_m}Y^{j_m} - 1) \in \Ann_{\Z}(c)$.
By performing computations modulo two instead, we have that
$(X^{i_1}Y^{j_1} - 1) \cdots (X^{i_m}Y^{j_m} - 1) \in \Ann_{\F}(c)$.
But then this polynomial, which only has line polynomial factors, is a multiple of $f_L$.
All factors of line polynomials are line polynomials in the same direction (or monomials) so that all
irreducible factors of $f_L$ are line polynomials, contradicting Corollary~\ref{th:ledrappier_no_line_poly}.
Here we used the fact that every polynomial can be factored uniquely into its irreducible factors~\cite{cox}.

\end{proof}

\section{Annihilators with Few Line Polynomial Factors}
\label{sec:general}
One of the key element in the proof of \cref{th:ledrappier} is the fact that we can find an annihilator with one variable eliminated from it.
It turns out that this is something we can do with other annihilators than $f_L$, and to do so we will use some parts of \textit{elimination theory}, so-called
resultants. Everything needed to understand this paper will be introduced but for more, Cox's book \cite{cox} is a good introduction.

\subsection{Resultant}
Usually resultant are defined for proper polynomials, so in order to stick with usual definitions we define them likewise. We will still use them for talking about Laurent polynomials, which is not a problem since from a Laurent polynomial annihilator we can always obtain a proper one by multiplying it by a suitable monomial.

Let $\K$ be a field, e.g. $\K=\FF$ for a prime $p$, and
let $K = \K[Y]$ be the ring of polynomials with one variable $Y$. Then $K[X] =\K[X,Y]$ is the ring of polynomials with two variables over $\K$.
Resultant can be defined for polynomials with more than two variables, but two is all we need here.
Resultant can be defined formally as a determinant of some matrix with coefficients in $K$, but for our purpose, all we need to know is that for any $f,g\in \K[X,Y]$, there exists a polynomial $\Res(f,g)\in \K[Y]$ in variable $Y$ only with the following two properties:

\begin{prop}
  \label{prop:det0}
  The polynomials $f$ and $g$ have a common factor in $\K[X,Y]$ if and only if $\Res(f,g) = 0$.
\end{prop}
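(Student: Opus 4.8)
The plan is to prove the Property through the Sylvester-matrix description of the resultant, combined with Gauss's lemma, working over the fraction field of $\K[Y]$. Set $K=\K[Y]$ and let $L=\K(Y)$ be its fraction field, and regard $f,g$ as elements of $K[X]\subseteq L[X]$, with $m=\deg_X f$ and $n=\deg_X g$. I would assume $m,n\ge1$ (the degenerate cases, where one of $f,g$ lies in $\K[Y]$, are handled separately), and I read ``common factor'' here as one of positive degree in $X$, since the resultant is taken with respect to $X$. Recall $\Res(f,g)=\det S$ for the $(m+n)\times(m+n)$ Sylvester matrix $S$ of $f$ and $g$ over $K$. The first step is the classical reinterpretation of this determinant: the rows of $S$ record the coefficients of $X^{n-1}f,\dots,f,X^{m-1}g,\dots,g$ in the monomials $X^{m+n-1},\dots,X,1$, so $\det S=0$ (in $K$, equivalently in $L$) precisely when the $L$-linear map $(A,B)\mapsto Af+Bg$, defined on pairs with $\deg_X A<n$ and $\deg_X B<m$, fails to be injective, i.e.\ precisely when $Af+Bg=0$ for some $(A,B)\ne(0,0)$ obeying those degree bounds.

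Granting this, the ``if'' direction is a one-line verification: from $f=hp$ and $g=hq$ in $\K[X,Y]$ with $\deg_X h\ge1$ I get $\deg_X p<m$ and $\deg_X q<n$, and $A=q$, $B=-p$ satisfy $Af+Bg=qhp-phq=0$ with $(A,B)\ne(0,0)$, whence $\Res(f,g)=\det S=0$. For the converse I would start from such $A,B$, so $Af=-Bg$, and argue inside the PID $L[X]$: were $f$ and $g$ coprime there, then $f\mid Bg$ would give $f\mid B$, which is impossible when $\deg_X B<\deg_X f$ unless $B=0$, and $B=0$ in turn forces $A=0$, contradicting $(A,B)\ne(0,0)$. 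Hence $f$ and $g$ admit a common factor of positive $X$-degree in $L[X]$.

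The remaining step is to descend from $L[X]$ back to $\K[X,Y]=K[X]$. Pick $d\in L[X]$ dividing both $f$ and $g$ with $\deg_X d\ge1$; since $K=\K[Y]$ is a UFD, scaling $d$ by a suitable nonzero element of $L$ yields a primitive polynomial $d_0\in K[X]$ with $\deg_X d_0=\deg_X d\ge1$, and Gauss's lemma --- a primitive polynomial of $K[X]$ that divides an element of $K[X]$ inside $L[X]$ already divides it inside $K[X]$ --- upgrades $d_0\mid f$ and $d_0\mid g$ to divisibilities in $\K[X,Y]$. So $d_0$ is a non-constant common factor of $f$ and $g$ in $\K[X,Y]$, which finishes the ``only if'' direction.

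The one genuinely non-routine point is the first step: identifying the vanishing of $\det S$ with the solvability of $Af+Bg=0$ under the stated degree constraints is exactly the linear-algebra content packed into the Sylvester matrix, and it is what makes the whole argument work; after that the coprimality argument in $L[X]$ and the Gauss's lemma descent are entirely standard. As all of this is classical, we are content to cite~\cite{cox}.
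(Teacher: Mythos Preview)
Your argument is the standard classical proof of this fact via the Sylvester matrix, coprimality in the PID $L[X]=\K(Y)[X]$, and Gauss's lemma, and it is correct. You are also right to flag that ``common factor'' must be read as ``common factor of positive $X$-degree'' for the equivalence to hold as stated; a common factor lying purely in $\K[Y]$ scales the Sylvester determinant by a power of that factor but does not force it to vanish. This precision is harmless for the paper's only use of the property (in Lemma~\ref{th:two_ann}), where ``no common factors'' is the hypothesis.

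As for comparison with the paper: there is nothing to compare. The paper does not prove \cref{prop:det0}; it is introduced as one of two black-box properties of the resultant, with a reference to~\cite{cox}. Your proposal supplies exactly the textbook argument that the citation points to, so in effect you have unpacked what the paper deliberately left packed.
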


\begin{prop}
  \label{prop:det}
  There exist $\alpha, \beta\in \K[X,Y]$ such that
  \[ \alpha f + \beta g = \Res(f,g) .\]
\end{prop}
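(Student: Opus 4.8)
The plan is to recall the standard construction of the resultant as the determinant of the Sylvester matrix and to read off $\alpha$ and $\beta$ from the classical adjugate identity; this is the usual argument from elimination theory (see~\cite{cox}), specialized to the two-variable setting used here.

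First I would set up the Sylvester map. Write $K=\K[Y]$, and let $m=\deg_X f$ and $n=\deg_X g$ be the (formal) $X$-degrees used to define the resultant. Consider the $K$-linear map
\[ \Phi\colon K[X]_{<n}\oplus K[X]_{<m}\longrightarrow K[X]_{<m+n},\qquad (u,v)\longmapsto uf+vg, \]
where $K[X]_{<d}$ denotes the free $K$-module with basis $1,X,\dots,X^{d-1}$. Both source and target are free of rank $m+n$, so $\Phi$ has a square matrix $M$ in these monomial bases, and $M$ is (a transpose of) the Sylvester matrix of $f$ and $g$: its entries are the coefficients $a_i,b_i\in K$ of $f$ and $g$. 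By the determinantal definition of the resultant, $\det M=\pm\,\Res(f,g)$.

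Next I would invoke the adjugate identity $M\cdot\operatorname{adj}(M)=\det(M)\cdot I$, which holds over any commutative ring, in particular over $K$. Let $e$ be the basis vector of $K[X]_{<m+n}$ corresponding to the monomial $X^0$, and set $x=\pm\operatorname{adj}(M)\,e$, i.e. $\pm$ the appropriate column of $\operatorname{adj}(M)$. Then
\[ M x=\pm\,M\operatorname{adj}(M)\,e=\pm\det(M)\,e=\Res(f,g)\,e. \]
Reading $x$ back as a pair $(u,v)\in K[X]_{<n}\oplus K[X]_{<m}$, this says exactly $uf+vg=\Res(f,g)\cdot X^0=\Res(f,g)$. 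The entries of $x$ are, up to sign, cofactors of $M$, hence polynomials in $K=\K[Y]$; therefore $u,v\in\K[Y][X]=\K[X,Y]$, and we may take $\alpha=u$ and $\beta=v$. (The companion Property~\ref{prop:det0} comes out of the same picture: $\Res(f,g)=0$ iff $M$ is singular over the fraction field $\K(Y)$ iff $\ker\Phi\neq 0$, i.e. iff there are $u,v$, not both zero, of the prescribed bounded degrees with $uf=-vg$, which by unique factorization in $\K[X,Y]$ is equivalent to $f$ and $g$ having a nonconstant common factor.)

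I do not expect a genuine obstacle here — the difficulty is purely bookkeeping. One must fix a single convention for the Sylvester matrix and track the resulting sign in $\det M=\pm\,\Res(f,g)$ (and hence the sign in $x=\pm\operatorname{adj}(M)\,e$); one must remember that $\Res$ is defined via the \emph{formal} degrees $m,n$, so nothing breaks if a leading coefficient vanishes identically in $Y$; and one should check the degenerate cases where $f$ or $g$ is constant in $X$, where the matrix $M$ becomes (a block of) a scalar matrix and the identity above still produces valid $\alpha,\beta$ (for instance $\alpha=0$ and $\beta$ a power of $g$ when $\deg_X g=0$). All of this is routine and documented in~\cite{cox}.
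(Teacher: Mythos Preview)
Your argument is correct and is precisely the standard Sylvester-matrix/adjugate proof one finds in the reference~\cite{cox}. Note, however, that the paper does not prove this property at all: it is stated as one of two black-box facts about the resultant, with a pointer to Cox's book for details, so there is no ``paper's own proof'' to compare against. Your write-up simply supplies the omitted standard argument.
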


The second property exactly states that we can find a linear combination of $f$ and $g$ that eliminates variable $X$.

\begin{lemma}
  \label{th:two_ann}
  Let $c$ be a power series over a field $R$. If $c$ is annihilated by two non-zero polynomials $f$ and $g$ with no common factors then $c$ is two-periodic.
\end{lemma}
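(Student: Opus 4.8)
The strategy is to use the resultant to strip one variable at a time. Given the two coprime annihilators $f,g\in R[X,Y]$ of $c$, I would first reduce to proper (non-Laurent) polynomials by multiplying each by a suitable monomial; this changes neither the annihilator property (monomials are units in $R[X^{\pm1},Y^{\pm1}]$) nor coprimality. Now apply Property~\ref{prop:det}: there are $\alpha,\beta\in R[X,Y]$ with $\alpha f+\beta g=\Res(f,g)$. Since $f$ and $g$ annihilate $c$, so does any polynomial combination of them, hence $\Res(f,g)$ annihilates $c$ as well. By Property~\ref{prop:det0}, the coprimality of $f$ and $g$ forces $\Res(f,g)\ne 0$. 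So $c$ is annihilated by a non-zero polynomial $p(Y)\in R[Y]$ depending on $Y$ only.

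The next step is to symmetrically eliminate $Y$. Running the same argument with the roles of $X$ and $Y$ exchanged — i.e. using $\ResY(f,g)$, the resultant taken with respect to $Y$ over the ring $R[X]$ — gives a non-zero polynomial $q(X)\in R[X]$ in $X$ only that annihilates $c$. For this I need $f$ and $g$ to have no common factor in $R[X,Y]$ when viewed as polynomials in $Y$ over $R[X]$, which is the same condition (a common factor is a common factor regardless of which variable one singles out), so Property~\ref{prop:det0} applies again and $\ResY(f,g)\ne 0$.

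Now $c$ is annihilated by a non-zero $p(Y)\in R[Y]$ and a non-zero $q(X)\in R[X]$. A non-zero polynomial in one variable, say $p(Y)=\sum_{j} a_j Y^j$ with at least two terms (if it had one term $c$ would be zero and the statement is trivial), is a line polynomial in the vertical direction; similarly $q(X)$ is a line polynomial in the horizontal direction (or each is a monomial, forcing $c=0$). By the cited fact that any configuration annihilated by a line polynomial is periodic in that polynomial's direction, $c$ has a vertical vector of periodicity and a horizontal vector of periodicity. These are linearly independent, so $c$ is two-periodic, as claimed.

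The only subtlety — and the step I would be most careful about — is making sure the resultant machinery is being invoked in the right ring. The resultant $\Res(f,g)$ as defined lives in $R[Y]$ and its vanishing detects common factors in $R[X,Y]$ (equivalently, by Gauss's lemma, in $R(Y)[X]$), which is exactly the hypothesis we are handed; so the non-vanishing is genuinely guaranteed. One should also note that $\Res(f,g)$ need not itself be a line polynomial in an obvious visual sense — it is simply \emph{some} non-zero element of $R[Y]$, which suffices — and that the passage from Laurent to proper polynomials at the start is harmless precisely because it only multiplies $f$ and $g$ by units.
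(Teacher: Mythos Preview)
Your proof is correct and follows essentially the same route as the paper: use the resultant (Property~\ref{prop:det}) to eliminate $X$ and obtain a non-zero one-variable annihilator in $R[Y]$ via Property~\ref{prop:det0}, then repeat with the roles of $X$ and $Y$ swapped to get a non-zero annihilator in $R[X]$, and conclude two-periodicity from the two independent line-polynomial annihilators. The only differences are cosmetic: you spell out the Laurent-to-proper reduction and the monomial edge case more explicitly than the paper does.
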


\begin{proof}
Using \cref{prop:det}, let $\alpha, \beta\in \K[X,Y]$ be such that
\[ \alpha f + \beta g = \Res(f,g) .\]
Because $f$ and $g$ are both annihilators of $c$ so is $\Res(f,g)$.
Because $f$ and $g$ have no common factors, \cref{prop:det0} tells us that $\Res(f,g)\neq 0$.
We found an annihilator which is a line polynomial or a monomial (only with variable $Y$) so $c$ must be periodic along $Y$.

Symmetrically, there also exist $\gamma, \delta\in \K[X,Y]$ such that
\[ \gamma f + \delta g = \ResY(f,g) .\]
Again,  $\ResY(f,g)\neq 0$, so $c$ is also periodic along $X$.
\end{proof}

\noindent
{\bf Remark.}
In~\cite{karifull} we analyzed the structure of annihilator ideals of configurations over $\Z$, and an analogous result for the case of $\Z$  follow from those considerations.
We included here the simple proof above for the sake of completeness, and in order to cover also the case of arbitrary coefficient fields.

\subsection{Main Result}

Now we can easily generalize \cref{th:ledrappier}. We consider an algebraic subshift $X_f$ over a finite field $\FF$ defined by an annihilator
$f\in \FF[X^{\pm 1}, Y^{\pm 1}]$. A configuration $c\in \FF[[X^{\pm 1}, Y^{\pm 1}]]$ will also be interpreted as a configuration over $\Z$
by mapping the symbols by $a_{\FF} \mapsto a_{\Z}$ for all $a\in\{0,1,\dots ,p-1\}$. Then, as in the proof of \cref{th:ledrappier}, we can define both
$\Ann_\FF(c)$ and $\Ann_\Z(c)$, and use the fact that any $g\in\Ann_\Z(c)$ is also in $\Ann_\FF(c)$ when its coefficients are reduced modulo $p$.

\begin{theorem}
  \label{th:main_th}
  Let $c\in X_f$ for a polynomial $f\in \FF[X^{\pm 1}, Y^{\pm 1}]$, and suppose that $\Ann_\Z(c)$ contains a non-zero polynomial.
  \begin{itemize}
    \item If $f$ has no line polynomial factors then $c$ is two-periodic.
    \item If all line polynomial factors of $f$ are in the same direction then $c$ is periodic in this direction.
  \end{itemize}
\end{theorem}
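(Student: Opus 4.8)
The plan is to generalize the argument used for Theorem~\ref{th:ledrappier}, replacing the ad hoc elimination of $X$ via $X = 1+Y+f_L$ by the systematic use of resultants from \cref{prop:det0,prop:det}. The starting point is the same: we have a configuration $c\in X_f$, so $f\in\Ann_\FF(c)$, and by assumption there is also a non-zero $h\in\Ann_\Z(c)$. By Theorem~\ref{th:decompo} applied to the $\Z$-configuration $c$, we may take $h$ to be a product of line polynomials $(X^{i_1}Y^{j_1}-1)\cdots(X^{i_m}Y^{j_m}-1)$, and reducing its coefficients modulo $p$ gives a non-zero polynomial (each factor stays non-zero mod $p$) which lies in $\Ann_\FF(c)$. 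So over $\FF$ we have two annihilators of $c$: the polynomial $f$, and a non-zero product $h$ of line polynomials.

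Next I would split into cases according to whether $f$ and $h$ share a common factor over $\FF[X,Y]$. If they have no common factor, then \cref{th:two_ann} immediately gives that $c$ is two-periodic, which is even stronger than what is claimed in either bullet. If they do share a common factor $d$, then $d$ divides $h$; since $h$ is a product of line polynomials and every factor of a line polynomial is itself a line polynomial in the same direction (or a monomial) — using unique factorization, exactly as in the proof of Theorem~\ref{th:ledrappier} — every irreducible factor of $d$ is a line polynomial, and all the line polynomials appearing among the irreducible factors of $h$ lie in directions $(i_1,j_1),\dots,(i_m,j_m)$. The key point is that $d$ is a common factor, hence $d\mid f$, so $f$ has a line polynomial factor.

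From here the two bullets follow. In the first bullet $f$ has no line polynomial factors, so the common-factor case is impossible and $c$ must be two-periodic. In the second bullet all line polynomial factors of $f$ are in a single direction $\vec u$; then any line polynomial dividing $d$ (and hence $f$) is in direction $\vec u$, so $d$ itself — being a product of such — is a line polynomial in direction $\vec u$ (or a monomial, which would force $c=0$ and the statement is trivial). Since $d\in\Ann_\FF(c)$ is a line polynomial in direction $\vec u$, the cited fact that a configuration annihilated by a line polynomial is periodic in that polynomial's direction shows $c$ is periodic in direction $\vec u$, as required.

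The main obstacle, and the place to be careful, is the common-factor case: one must be sure that the \emph{greatest} common factor of $f$ and $h$ is really a line polynomial (or monomial), not merely that $f$ and $h$ have \emph{some} line polynomial in common. This is where the structure of $h$ as a product of line polynomials from Theorem~\ref{th:decompo} is essential — without it, a common factor of $f$ and an arbitrary $\Z$-annihilator need not be a line polynomial. A secondary subtlety is the usual passage between Laurent polynomials and proper polynomials: resultants and unique factorization are stated for proper polynomials, so one multiplies by a suitable monomial first, noting that this does not affect membership in the annihilator ideal nor the notion of being a line polynomial in a given direction. I also have to handle the degenerate sub-case where the common factor is a monomial (forcing $c = 0$) separately, though there the conclusion is immediate.
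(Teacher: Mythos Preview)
Your argument for the first bullet is correct and matches the paper's. The second bullet, however, has a genuine gap: you assert that the common factor $d$ of $f$ and $h$ lies in $\Ann_\FF(c)$, but a divisor of an annihilator need not itself annihilate $c$. From $fc=0$ and $d\mid f$ you only get $d\cdot(f/d)\cdot c=0$, not $dc=0$; since $\FF[X,Y]$ is not a PID there is no B\'ezout identity expressing $d$ as a polynomial combination $\alpha f+\beta h$. So the sentence ``Since $d\in\Ann_\FF(c)$\dots'' is unjustified, and the argument collapses at exactly that point.

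The paper closes this gap differently. Writing $f=f'd$ and $h=g'd$ with $d=\gcd(f,h)$, it passes to the \emph{modified} configuration $c'=dc$, which is annihilated by the coprime pair $f'$ and $g'$; \cref{th:two_ann} then makes $c'$ two-periodic, so $c'$ has a line-polynomial annihilator $h'$ in the direction of $d$, and finally $dh'$---still a line polynomial in that direction---annihilates $c$. Your ``main obstacle'' paragraph worries about whether $d$ is a line polynomial; that part of your argument is actually fine. The real obstacle is getting from ``$d$ is a line polynomial dividing two annihilators'' to ``some line polynomial annihilates $c$'', and that requires the extra step through $c'=dc$.
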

\begin{proof}
By Theorem~\ref{th:decompo}
there exists non-zero $(i_1, j_1), \hdots, (i_m, j_m)\in\Z^2$ such that
$(X^{i_1}Y^{j_1} - 1) \cdots (X^{i_m}Y^{j_m} - 1) \in \Ann_{\Z}(c)$.
By performing computations modulo $p$ instead, we have that
$g(X,Y)=(X^{i_1}Y^{j_1} - 1) \cdots (X^{i_m}Y^{j_m} - 1) \in \Ann_{\FF}(c)$.

If $f$ has no line polynomial factors then $f$ and $g$ do not have any common factors. By Lemma~\ref{th:two_ann}
then $c$ is two-periodic. This proves the first claim.

Suppose then that all line polynomial factors of $f$ are in the same direction. Let $h$ be the greatest common divisor of $f$ and $g$
so that we can write $f=f'h$ and $g=g'h$ where $f'$ and  $g'$ do not have common factors. Note that $h$ is a line polynomial: as a factor of $g$
$h$ is a product of line polynomials, and as a factor of $f$ these line polynomials are all in the same direction.

Because $c'=hc$ is annihilated by both $f'$ and $g'$ it follows from Lemma~\ref{th:two_ann}
that $c'$ is two-periodic. In particular, there is a line polynomial $h'$ in the direction of $h$ that annihilates $c'$.
We have $hh'\in \Ann_{\FF}(c)$ so that $c$ is annihilated by the line polynomial $hh'$ and is therefore periodic in this direction.

\end{proof}

Using Lemma~\ref{th:low_complexity} we now immediately get that algebraic subshifts defined by an annihilator with at most one line polynomial factor
have the generalized Nivat property.

\begin{corollary}
  \label{coro:alg_sft}
  Let $c\in X_f$ for a polynomial $f\in \FF[X^{\pm 1}, Y^{\pm 1}]$ whose line polynomial factors are all in the same direction.
  If $c$ has low complexity then it is periodic.
\end{corollary}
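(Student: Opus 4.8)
The plan is to derive Corollary~\ref{coro:alg_sft} as an immediate consequence of Theorem~\ref{th:main_th} together with Lemma~\ref{th:low_complexity}, so essentially no new argument is needed beyond wiring these two results together. First I would observe that the hypothesis ``$c$ has low complexity'' means there is a finite shape $D$ with $|P(c,D)|\leq |D|$. Then I would invoke Lemma~\ref{th:low_complexity}, being careful about the coefficient ring: we interpret $c$ over $\Z$ via the renaming $a_\FF\mapsto a_\Z$ for $a\in\{0,\dots,p-1\}$ exactly as set up in the paragraph preceding Theorem~\ref{th:main_th}, and apply Lemma~\ref{th:low_complexity} with $R=\Z$ to conclude that $\Ann_\Z(c)$ contains a non-zero polynomial.

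Next I would feed this into Theorem~\ref{th:main_th}. Its standing hypothesis is precisely $c\in X_f$ and $\Ann_\Z(c)\neq\{0\}$, both of which we now have. Since the line polynomial factors of $f$ are all in the same direction, we are in one of the two cases of the theorem: either $f$ has no line polynomial factors at all (first bullet, giving two-periodicity, hence in particular periodicity) or all its line polynomial factors share a common direction $\vec u$ (second bullet, giving periodicity in direction $\vec u$). In either case $c$ is periodic, which is exactly the conclusion of the corollary.

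There is no real obstacle here; the only thing requiring a moment of care is the compatibility of the two rings, namely that the $c$ fed to Lemma~\ref{th:low_complexity} (as a $\Z$-configuration) and the $c$ appearing in $X_f$ (as an $\FF$-configuration) are linked by the reduction-mod-$p$ map, so that a non-zero annihilator over $\Z$ produces the non-zero integer polynomial that Theorem~\ref{th:main_th} demands. This is the same bookkeeping already performed in the proof of Theorem~\ref{th:ledrappier}, so it can be stated in one sentence. Thus the proof is a two-line chain: Lemma~\ref{th:low_complexity} supplies a non-zero element of $\Ann_\Z(c)$, and Theorem~\ref{th:main_th} then yields periodicity of $c$.
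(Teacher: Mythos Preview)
Your proposal is correct and matches the paper's approach exactly: the paper states in the sentence preceding the corollary that it follows immediately from Lemma~\ref{th:low_complexity} and Theorem~\ref{th:main_th}, which is precisely the two-step chain you describe. Your extra remark about the $\Z$-versus-$\FF$ bookkeeping is accurate and in line with the conventions set up just before Theorem~\ref{th:main_th}.
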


\noindent
{\bf Remark.}
Elements of the Ledrappier subshift are exactly the space-time diagrams of the one-dimensional XOR cellular automaton. More generally,
the algebraic subshift $X_f$ defined by a polynomial of the form $f(X,Y)=Y-g(X)\in \FF[X^{\pm 1}, Y^{\pm 1}]$ consists exactly of the
space-time diagrams of the one-dimensional additive cellular automaton over $\FF$ whose local rule is given by $g(X)$. See Section~9 of~\cite{karisurvey} for a short discussion about additive cellular automata. If $g(X)$ has at least
two terms then the support of $f$ has triangular shape and therefore $f$ has no line polynomial factors. If $g(X)$ has one term then $f$ itself is
a line polynomial. In any case, Theorem~\ref{th:main_th} and Corollary~\ref{coro:alg_sft} hold for the space-time diagrams of all
one-dimensional additive cellular automata over field $\FF$.

\section{Square Annihilators}
\label{sec:fourdot}
After Corollary~\ref{coro:alg_sft}, it is natural to take a look at configurations annihilated by polynomials with line polynomial factors in more directions. It turns out that already products of two line polynomials include examples with and without the generalized Nivat property.
We first prove that the 4-dot system defined by $(1+X)(1+Y)$ over $\F$ has the generalized Nivat property and then we show that
the system defined by $(1+X^2)(1+Y^2)$ does not.

\subsection{The 4-dot system}

\begin{definition}
  The \emph{4-dot system} $S$ is the algebraic subshift $X_{f_S}$ over $\F$ defined by the annihilator $f_S =  1+X+Y+XY = (1+X)(1+Y)$.
\end{definition}

\begin{theorem}
  \label{th:square}
  Every low complexity $c\in X_{f_S}$ is periodic.
\end{theorem}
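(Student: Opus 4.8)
The plan is to follow the same strategy as in the proof of \cref{th:main_th}, but to deal carefully with the fact that $f_S=(1+X)(1+Y)$ has line polynomial factors in \emph{two} different directions (horizontal and vertical), so the theorem as stated does not apply directly. By \cref{th:low_complexity} it suffices to show that if $\Ann_\Z(c)$ contains a non-zero polynomial then $c$ is periodic; so fix such a $c\in X_{f_S}$ and, interpreting it over $\Z$ via $0_\F\mapsto 0_\Z$, $1_\F\mapsto 1_\Z$, assume $\Ann_\Z(c)\neq\{0\}$. By \cref{th:decompo} there is an annihilator over $\Z$, and hence over $\F$, of the form $g=(X^{i_1}Y^{j_1}-1)\cdots(X^{i_m}Y^{j_m}-1)$, a product of line polynomials.

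First I would split into cases according to $\gcd(f_S,g)$ in $\F[X^{\pm1},Y^{\pm1}]$. If this gcd is $1$, then $f_S$ and $g$ are coprime annihilators of $c$, so \cref{th:two_ann} gives that $c$ is two-periodic, hence periodic, and we are done. Otherwise, since $f_S=(1+X)(1+Y)$ and $1+X$, $1+Y$ are irreducible (and non-associate) over $\F$, the gcd is one of $1+X$, $1+Y$, or $(1+X)(1+Y)=f_S$. Up to exchanging the roles of $X$ and $Y$ (the system is symmetric), assume $1+X$ divides $g$. Write $g=(1+X)g'$. The key point is that $1+X$ divides $g$ forces one of the line factors $X^{i_k}Y^{j_k}-1$ of $g$ to be a horizontal line polynomial, i.e.\ $j_k=0$; this already shows $c$ is horizontally periodic, and we would be done. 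So the only genuinely remaining possibility is that \emph{no} line factor of $g$ is horizontal, in which case $1+X\nmid g$; symmetrically $1+Y\nmid g$ unless $c$ is vertically periodic. Hence the one case that is not immediately handled is $\gcd(f_S,g)=1$, which is exactly the first case — so in fact the argument closes once one observes that a product of line polynomials $X^{i_k}Y^{j_k}-1$ is divisible by $1+X$ precisely when some $(i_k,j_k)$ is a nonzero horizontal vector, and similarly for $1+Y$.

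Let me re-examine to make sure nothing is missed: if $\gcd(f_S,g)\ne 1$ then $1+X$ or $1+Y$ divides $g$; divisibility of $\prod_k (X^{i_k}Y^{j_k}-1)$ by the irreducible $1+X$ means $1+X$ divides some $X^{i_k}Y^{j_k}-1$; but $X^{i_k}Y^{j_k}-1$ (after clearing a monomial to make it proper) is a line polynomial in direction $(i_k,j_k)$, and it vanishes on $X=1$ identically only if $j_k=0$, i.e.\ it is a horizontal line polynomial $X^{i_k}-1$ up to a monomial. Then $X^{i_k}-1\in\Ann_\F(c)$, so $c$ is periodic with period $(i_k,0)$, done; symmetrically for $1+Y$. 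And if $\gcd(f_S,g)=1$, \cref{th:two_ann} applies. So the plan is: (i) extract the product-of-line-polynomials annihilator $g$ over $\F$ from \cref{th:decompo}; (ii) if $f_S,g$ are coprime, invoke \cref{th:two_ann}; (iii) otherwise identify an explicit horizontal or vertical line factor of $g$ and conclude periodicity directly. The main obstacle — and the reason this is not a trivial corollary of \cref{th:main_th} — is ensuring that when $f_S$ and $g$ share the factor $1+X$, this genuinely produces a \emph{one-variable} (horizontal) line polynomial annihilator rather than merely a factor shared in a single direction that could a priori be tilted; this is handled by the unique factorization of $X^{i_k}Y^{j_k}-1$ and the fact that the only line polynomials divisible by $1+X$ are horizontal ones.
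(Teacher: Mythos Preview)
Your argument has a genuine gap in case (iii). You correctly observe that if $1+X$ divides $g=\prod_k(X^{i_k}Y^{j_k}-1)$ over $\F$ then some $j_k=0$, so that $X^{i_k}-1$ is a \emph{factor} of $g$. But you then write ``Then $X^{i_k}-1\in\Ann_\F(c)$'', and this does not follow: a factor of an annihilator need not itself be an annihilator. Concretely, take $c\in X_{f_S}$ defined by $c_{i,j}=1$ iff $i=0$ (a single vertical line). Over $\Z$ one checks $(X-1)(Y-1)c=0$, so $g=(X-1)(Y-1)\in\Ann_\Z(c)$, and over $\F$ we have $1+X\mid g$ with the horizontal factor $X-1$. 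Yet $X-1$ does \emph{not} annihilate $c$; indeed $c$ is not horizontally periodic at all (it is vertically periodic). So your inference ``horizontal factor of $g$ $\Rightarrow$ horizontal periodicity of $c$'' is false, and with it the whole case split collapses: the case $\gcd(f_S,g)\neq 1$ is precisely the hard one, and you have not handled it. The same obstruction blocks the case $\gcd(f_S,g)=f_S$, where you would need to extract periodicity from the mere presence of both a horizontal and a vertical factor inside $g$.

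The paper's proof proceeds quite differently and does not rely on the factorization of $g$ at all. It first writes any $c\in X_{f_S}$ over $\F$ as $c=h+v$ with $h$ horizontally periodic and $v$ vertically periodic, introduces the auxiliary binary configuration $d$ with $d_{i,j}=h_{i,j}v_{i,j}$, and uses the integer identity $c=h+v-2d$. Multiplying a nonzero $p_0\in\Ann_\Z(c)$ by $(X-1)(Y-1)$ yields a polynomial $p$ that annihilates $c$, $h$, $v$ and hence $d=t(X)s(Y)$ as well; a variable-by-variable analysis of $p(X,Y)t(X)s(Y)=0$ then forces either $v$ to be horizontally periodic or $h$ to be vertically periodic, and in either case $c=h+v$ is periodic. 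The crucial idea you are missing is this passage to $\Z$ via $c=h+v-2d$, which converts the $\F$-decomposition into something the integer annihilator can act on.
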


\begin{proof}
Similarly as before, we are going to prove the more general statement that if $c$ has a non-trivial annihilator $p_0$ over $\Z$ then it is periodic.

We first observe that $c=h+v$ for $h,v\in \F[[X^{\pm 1}, Y^{\pm 1}]]$  that are
$(1,0)$-periodic and $(0,1)$-periodic, respectively. Indeed, we can take $h_{i,j}=c_{0,j}$ and $v_{i,j}=c_{i,0}+c_{0,0}$, for all $i,j\in \Z$.
Because $(1+X^i)(1+Y^j)$ is a multiple of $(1+X)(1+Y)$ over $\F$, polynomial $(1+X^i)(1+Y^j)$ annihilates $c$, for all $i,j\in\Z$. This
means that $c_{i,j}=c_{0,j}+c_{i,0}+c_{0,0}=h_{i,j}+v_{i,j}$.

Using the periodicity of $h$ and $v$  we can write $h = \ones(X) s(Y)$ and $v = t(X) \ones(Y)$, with $\ones(X) = \sum_{i\in\Z} X^i$ and $s,t$ two formal series depending only one one variable.
Let us define another binary configuration $d$ by
\[ d(X,Y) = t(X)s(Y) .\]
In other words, $d$ is the configuration that has ones where both $h$ and $v$ have ones:
\[ d_{i,j} =
\begin{cases}
  1 \text{ if } h_{i,j} = v_{i,j} = 1\\
  0 \text{ otherwise}
\end{cases}
.\]
Interpreted in \Z, we have
\[ c = h+v - 2d .\]
This is the case since the two sides are identical modulo two and both sides only contain values 0 and 1.

Consider next the polynomial
\[ p = p_0 (X-1)(Y-1)\]
over $\F$. Because $p_0$ annihilates $c$, and $X-1$ and $Y-1$ annihilate $h$ and  $v$, respectively, we have that $pc=ph=pv=0$. Therefore
$pd=0$ as well, which can be written as
\[ p(X,Y) t(X) s(Y) = 0, \]
emphasizing the variable dependencies of the polynomials. We have the following two cases:
\medskip

\noindent
\emph{Case 1:} Suppose that $p(X,Y) t(X)=0$. In $p(X,Y)$ we collect together terms with the same power of variable $Y$, obtaining
$$
p(X,Y)=\sum_{j\in \Z} Y^jp_j(X)
$$
where at least some $p_j(X)$ is a non-zero polynomial. We have
$$\sum_{j\in \Z} Y^jp_j(X)t(X)=0.$$
This is an identity of formal power series so that
$p_j(X)t(X)=0$ for all $j\in\Z$. But then also $p_j(X)v=p_j(X)t(X)u(Y)=0$, so that $v$ is annihilated by a non-zero
horizontal line polynomial (or a non-zero monomial) $p_j(X)$. We conclude that
$v$ is horizontally periodic. But then also $c=h+v$ is horizontally periodic as a sum of two horizontally periodic configurations.

\medskip

\noindent
\emph{Case 2:} Suppose that $p(X,Y) t(X)\neq 0$. Now we collect in $p(X,Y) t(X)$ together variables with the same power of variable $X$, obtaining
$$
p(X,Y) t(X) = \sum_{i\in \Z} X^i q_i(Y),
$$
where at least some $q_i(Y)$ is a non-zero polynomial. Note that all $q_i(Y)$ are polynomials because powers of the variable
$Y$ only come from the polynomial $p(X,Y)$. Because
$$
0=p(X,Y)d(X,Y)=p(X,Y) t(X)s(Y) = \sum_{i\in \Z} X^i q_i(Y)s(Y),
$$
we have that $q_i(Y)s(Y)=0$ for all $i\in\Z$. Analogously to case 1 above, this implies that $h$ is vertically periodic, and therefore also $c$ is vertically periodic.
\end{proof}

\subsection{An algebraic subshift without the generalized Nivat property}

For some polynomials with two line polynomial factors, the associated subshift does not have the generalized Nivat property. This is typically the case when the annihilating polynomial allows the use of sublattices.

\begin{theorem}
  \label{th:square2}
  There exists a configuration $c$ over $\F$ annihilated by $f_T = (1+X^2)(1+Y^2)$ which is not periodic but has low complexity.
\end{theorem}

\begin{proof}
Let us take $c = h + v$, with
\begin{center}
  $ h_{i,j} =
  \begin{cases}
    1 \text{ if } j = 0 \text{ and } i \text{ even} \\
    0 \text{ otherwise}
  \end{cases}
  $
  and ~
  $ v_{i,j} =
  \begin{cases}
    1 \text{ if } i = 1 \text{ and } j \text{ even} \\
    0 \text{ otherwise}
  \end{cases}
  .$
\end{center}
Visually, $c$ is the superposition of a horizontal and a vertical line on two disjoint sublattices, see \cref{fig:sublat}.
Clearly $h$ is one-periodic with periodicity vector $(2,0)$ and $v$ is one-periodic with periodicity vector $(0,2)$. Their sum  $c$ is not periodic.

\begin{figure}[ht]
  \centering
  \scalebox{0.6}{\begin{tikzpicture}

  \definecolor{redd}{RGB}{255,59,37}
  \definecolor{bluee}{RGB}{17,86,178}

  \newcommand*{\rectopacity}{1}
  \newcommand*{\nbopacity}{0.8}

%% Grid
\draw [step=1, opacity=0.5] (0,0) grid (11,11);

%% h
\foreach \i in {0, 2, ..., 10}{
	\draw[fill=redd, opacity=\rectopacity, color=redd] (\i, 4) rectangle (\i+1, 4+1) ;
}

%% v
\foreach \j in {0, 2, ..., 10}{
	\draw[fill=bluee, opacity=\rectopacity, color=bluee] (3, \j) rectangle (3+1, \j+1) ;
}

%% 1
\foreach \i in {0, 2, ..., 10}{
\foreach \j in {0, 2, ..., 10}{
	\node [opacity=\nbopacity] at (\i+0.5, \j+0.5) {1};
}
}

%% 2
\foreach \i in {1, 3, ..., 10}{
\foreach \j in {0, 2, ..., 10}{
	\node [opacity=\nbopacity] at (\i+0.5, \j+0.5) {2};
}
}

%% 3
\foreach \i in {0, 2, ..., 10}{
\foreach \j in {1, 3, ..., 10}{
	\node [opacity=\nbopacity] at (\i+0.5, \j+0.5) {3};
}
}

%% 4
\foreach \i in {1, 3, ..., 10}{
\foreach \j in {1, 3, ..., 10}{
	\node [opacity=\nbopacity] at (\i+0.5, \j+0.5) {4};
}
}

\foreach \i in {3, 5, 7}{
  \foreach \j in {2,4,6}{
	 \draw[color=black, line width=3pt] (\i, \j) rectangle (\i+1, \j+1) ;
  }
}

\end{tikzpicture}}
  \caption{Sublattices of $c$ and shape $D$ superimposed. The horizontal line is from $h$ and the vertical one from $v$.}
  \label{fig:sublat}
\end{figure}

The periodicity of $h$ and $v$ directly implies that $f_T$ annihilates $c$:
$h$ being $(2,0)$-periodic $(1+X^2) h = 0$ and, analogously, $v$ being $(0,2)$-periodic $(1+Y^2) v = 0$. This means that
$f_Tc=(1+X^2)(1+Y^2)h+(1+X^2)(1+Y^2)v=0$.

\begin{figure}[ht]
  \centering
  \scalebox{0.6}{\begin{tikzpicture}

  \newcommand*{\opacity}{1}

%% Grid
\draw [step=1, opacity=0.3] (0,0) grid (4,4);
\draw  [opacity=0.3] (5,0) -- (5,5) -- (0,5);

\foreach \i in {0, 2, 4}{
  \foreach \j in {0, 2, 4}{
	 \draw[fill=black, opacity=\opacity, color=black] (\i, \j) rectangle (\i+1, \j+1) ;
  }
}

\end{tikzpicture}}
  \caption{The shape $D$ of low complexity.}
  \label{fig:D}
\end{figure}

The last thing we have to check is that $c$ has low complexity, i.e, there is a shape $D$ such that $P_c(D) \leq |D|$.
It is sufficient to take $D$ to be the scattered 3x3 square, as shown in \cref{fig:D}.
Patterns of shape $D$ in $c$ will only contain values from one of the four sublattices, depending on the parity of its position.
If $D$ is superimposed with sublattices 3 or 4, the pattern is blank.
With sublattice 1, it can only contain values from $h$, so it can have four different values: blank, and the horizontal line crossing at the top, the middle or the bottom. If it is on sublattice 2, then it has values from $v$, and here again there are four different possibilities.
Counting the blank shape only once, we obtain $P_c(D) = 1 + 3 + 3 = 7 < 9 = |D|$.

\end{proof}

\section{Conclusions}

To prove Nivat's conjecture it is enough to consider configurations of two-dimensional algebraic subshifts over $\FF$. We prove that simplest
subshifts do not contain a counter example to the conjecture, and they even have the stronger property that any configuration that has low complexity with respect to any finite shape is periodic. It remains open to characterize which algebraic subshifts have this generalized Nivat property. It remains also an interesting algorithmic problem to decide for a set of at most $|D|$ allowed patterns of shape $D$,
for some finite  $D\subseteq\Z^2$, whether there is a configuration all of whose $D$-patterns are among the allowed patterns. We do not know if
this emptyness problem of low complexity subshifts of finite type is decidable in general, but if the given patterns come from one of the algebraic subshifts studied
in this work then the emptyness problem is decidable by a standard argument because any configuration with these patterns must be periodic. We also ask
whether the generalized Nivat property may fail for other reasons than having the support of the defining polynomial in a proper sublattice of $\Z^2$.

\bibliographystyle{abbrv}
\bibliography{biblio}

\end{document}